\providecommand{\tabularnewline}{\\}
\newcommand{\lyxaddress}[1]{
\par {\raggedright #1
\vspace{1.4em}
\noindent\par}
}
\theoremstyle{plain}
\newtheorem{thm}{\protect\theoremname}
  \theoremstyle{definition}
  \newtheorem{defn}[thm]{\protect\definitionname}
  \theoremstyle{plain}
  \newtheorem{prop}[thm]{\protect\propositionname}
  \theoremstyle{plain}
  \theoremstyle{remark}
  \theoremstyle{remark}
  \newtheorem*{rem*}{\protect\remarkname}
  \newtheorem{cor}[thm]{Corollary}
  \newtheorem{example}[thm]{Example}
\newcommand{\vertexcn}{\begin{picture}(20,20)
  \put(6,-2){\line(1,0){20}}\put(6,2){\line(1,0){20}}
  \put(15,0){\line(1,-1){10}}\put(15,0){\line(1,1){10}}
  \end{picture}}
\newcommand{\vertexbn}{\begin{picture}(20,20)
  \put(6,-2){\line(1,0){20}}\put(6,2){\line(1,0){20}}
  \put(20,0){\line(-1,1){10}}\put(20,0){\line(-1,-1){10}}
  \end{picture}}
     \newcommand{\vertexccn}{\begin{picture}(20,20)
  \put(7,0){\line(1,0){20}}
  \put(6,-3){\line(1,0){22}}\put(6,3){\line(1,0){22}}
  \put(15,0){\line(1,1){10}}\put(15,0){\line(1,-1){10}}
  \end{picture}}
  \providecommand{\definitionname}{Definition}
  \providecommand{\lemmaname}{Lemma}
  \providecommand{\propositionname}{Proposition}
  \providecommand{\remarkname}{Remark}
\providecommand{\theoremname}{Theorem}
\begin{document}

\title{A quick proof of the  classification of  real  Lie superalgebras }

\author{Biswajit Ransingh and K C Pati}

\date{}
\maketitle

\begin{abstract}
This article classifies the real forms of  Lie Superalgebra
by Vogan diagrams, developing Borel and  de Seibenthal theorem of semisimple Lie algebras for Lie superalgebras.
A Vogan diagram is a Dynkin diagram of triplet $(\mathfrak{g}_{C},\mathfrak{h_{\overline{0}}},\triangle^{+})$, where $\mathfrak{g}_{C}$
is a real Lie superalgebra, $\mathfrak{h_{\overline{0}}}$ cartan subalgebra,
$\triangle^{+}$ positive root system. Although the classification of real forms of contragradient Lie superalgebras is already done.
But our method is a quicker one to classify.
\end{abstract}
\noindent 2010 AMS Subject Classification : 17B05, 17B22, 17B40\\
\noindent Keywords : Lie superalgebras, Vogan diagrams
\newline
\section{Introduction}
For a complex semi-simple Lie algebra $\mathfrak{g}$, it is well known that the conjugacy classes of real
forms of $\mathfrak{g}$ are in one to one correspondence with the conjugacy classes of involutions of $\mathfrak{g}$, if
we associate to a real form $\mathfrak{g}_{\mathbb{R}}$ one of its Cartan involutions $\theta$. Using a suited pair $(\mathfrak{h}, \prod)$ of a
Cartan subalgebra $\mathfrak{h}$ and a basis $\prod$ of the associated root system, an involution is described by
a "Vogan diagram".

Knapp \cite{Knapp:Lie groups} brought Vogan diagrams of simple Lie algebras  into the light to  represent 
the real forms of the complex simple Lie algebras.  Batra \cite{batra:affine,batra:vogan} 
developed a corresponding theory of Vogan diagrams for almost compact real forms of
indecomposable nontwisted  affine Kac-Moody Lie algebras.  Similar theory is also developed to find out the Vogan diagrams of hyperbolic 
Kac-Moody algebras \cite{ransingh:pati}.

A Vogan diagram is a Dynkin diagram with some additional 
information as follows. The 2-elements orbits under $\theta$ (Cartan involution) are exhibited 
by joining the corresponding simple roots by a double arrow and the 1-element orbit is painted in
black (respectively, not painted),  if the corresponding imaginary simple root is noncompact (respectively compact).

The real form is defined as a real Lie superalgebra such that its complexification
is the original complex Lie superalgebra. It can be seen easily that every standard real form is
naturally associated to an antilinear involutive automorphism of the complex Lie
superalgebra.

The classification of real semisimple Lie algebras will use maximally compact and split Cartan subalgebras.
 The Vogan diagram is based on the classification of maximally compact Cartan subalgebras.
 Recently similar work has been done using Vogan superdiagrams to classify  the real forms of  
 contragradient Lie superalgebras \cite{chuah:vsuper}, where the extended Dynkin diagrams of Lie superalgebra is used. But
 our method uses the ordinary Dynkin diagram as done by Knapp \cite{Knapp:Lie groups}.
In this article we construct all the real forms of Lie superalgebras by Vogan diagrams.




\section{Real forms and Vogan diagrams}
\begin{prop}
[\cite{Parker:classification} Proposition 1.4] Let $\mathcal{\mathfrak{g}}$
be a complex classical Lie superalgebra and let $C$ be an involutive
semimorphism of $\mathcal{\mathfrak{g}}$. Then $\mathfrak{g}_{C}=\left\{ x+Cx|x\in\mathfrak{g}\right\} $
is a real classical Lie superalgebra. 
\end{prop}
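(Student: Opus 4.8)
The plan is to identify $\mathfrak{g}_{C}$ with the fixed-point set $\mathfrak{g}^{C}=\{y\in\mathfrak{g}:Cy=y\}$ and then verify, in turn, that it is a real sub-superalgebra of $\mathfrak{g}$ whose complexification gives back $\mathfrak{g}$. First I would observe that $\mathfrak{g}_{C}=\mathfrak{g}^{C}$: since $C$ is involutive, $C(x+Cx)=Cx+C^{2}x=Cx+x$, so every element of $\mathfrak{g}_{C}$ is $C$-fixed; conversely, if $Cy=y$ then $y=\tfrac12(y+Cy)\in\mathfrak{g}_{C}$. Because $C$ is additive and merely antilinear (hence still $\mathbb{R}$-linear), $\mathfrak{g}^{C}$ is a real vector subspace of $\mathfrak{g}$.

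Next I would check closure under the superbracket and compatibility with the grading. A semimorphism is by definition an antilinear automorphism of the Lie superalgebra structure, so $C[a,b]=[Ca,Cb]$ for homogeneous $a,b$; if $Ca=a$ and $Cb=b$ then $C[a,b]=[a,b]$, giving $[\mathfrak{g}^{C},\mathfrak{g}^{C}]\subseteq\mathfrak{g}^{C}$. Moreover $C$ preserves the $\mathbb{Z}_{2}$-grading, so $\mathfrak{g}^{C}=(\mathfrak{g}^{C}\cap\mathfrak{g}_{\overline 0})\oplus(\mathfrak{g}^{C}\cap\mathfrak{g}_{\overline 1})$, which exhibits $\mathfrak{g}^{C}$ as a $\mathbb{Z}_{2}$-graded real algebra; super-skew-symmetry and the super-Jacobi identity hold in $\mathfrak{g}^{C}$ because they hold in $\mathfrak{g}$ and $\mathfrak{g}^{C}$ is a closed subset. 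Hence $\mathfrak{g}_{C}$ is a real Lie superalgebra.

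The core of the argument is that $\mathfrak{g}_{C}$ is a \emph{real form}, i.e. $\mathfrak{g}=\mathfrak{g}_{C}\oplus i\,\mathfrak{g}_{C}$ as real vector spaces, equivalently $\mathfrak{g}_{C}\otimes_{\mathbb{R}}\mathbb{C}\cong\mathfrak{g}$. For the spanning: given $x\in\mathfrak{g}$, write $x=\tfrac12(x+Cx)+i\cdot\tfrac1{2i}(x-Cx)$; the first summand lies in $\mathfrak{g}_{C}$, and by antilinearity $C\big(\tfrac1{2i}(x-Cx)\big)=-\tfrac1{2i}(Cx-x)=\tfrac1{2i}(x-Cx)$, so the second summand is $i$ times an element of $\mathfrak{g}_{C}$. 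For the directness: if $y=iz$ with $y,z\in\mathfrak{g}_{C}$, then $y=Cy=C(iz)=-iCz=-iz=-y$, so $y=0$. This decomposition also respects the grading, since each homogeneous component splits the same way, so $\mathfrak{g}_{C}$ is a real form of $\mathfrak{g}$ as a Lie superalgebra.

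Finally, since $\mathfrak{g}$ is classical and ``classical'' for a real Lie superalgebra is a property detected by its complexification — reductivity of the even part and complete reducibility of the odd part as a module over the even part are preserved under the scalar extension $\mathbb{R}\hookrightarrow\mathbb{C}$ — the real form $\mathfrak{g}_{C}$ is itself a real classical Lie superalgebra. The one genuinely delicate step is this last one: ensuring that the structural conditions defining ``classical'' descend from $\mathfrak{g}$ to $\mathfrak{g}_{C}$, rather than merely that $\mathfrak{g}_{C}$ is some real form; this is handled by combining the identification $\mathfrak{g}_{C}\otimes_{\mathbb{R}}\mathbb{C}\cong\mathfrak{g}$ with the standard fact that a real Lie superalgebra is classical precisely when its complexification is.
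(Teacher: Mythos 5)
The paper itself offers no proof of this proposition---it is quoted directly from Parker---so the comparison can only be with the standard argument, which is exactly what you give, and it is correct: the identification of $\mathfrak{g}_{C}$ with the fixed-point set of $C$, closure under the bracket and the $\mathbb{Z}_{2}$-grading, and the decomposition $\mathfrak{g}=\mathfrak{g}_{C}\oplus i\,\mathfrak{g}_{C}$ are all handled properly. The only step you assert rather than prove is that ``classical'' descends to the real form; to close it, observe that a nonzero ideal $I$ of $\mathfrak{g}_{C}$ complexifies to the nonzero ideal $I\oplus iI$ of $\mathfrak{g}$, so simplicity of $\mathfrak{g}$ forces $I\oplus iI=\mathfrak{g}$ and hence $I=\mathfrak{g}_{C}$ by real dimension count, while complete reducibility of the $\mathfrak{g}_{\overline{0}}$-action on $\mathfrak{g}_{\overline{1}}$ is preserved under (and detected by) the scalar extension $\mathbb{R}\hookrightarrow\mathbb{C}$---with that spelled out, your proof is complete.
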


\begin{prop}
[\cite{Parker:classification} Proposition 1.5] If $\mathfrak{g}_C$
is a real classical Lie superalgebra, its complexification $\mathfrak{g}=\mathfrak{g}_{C}\otimes\mathbb{C}$
is a Lie superalgebra which is either classical or direct sum of two
isomorphic ideals which are classical.
\end{prop}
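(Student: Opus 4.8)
\emph{Proof plan.} The plan is to mimic, in the $\mathbb{Z}_2$-graded setting, the classical argument that the complexification of a real simple Lie algebra is either simple or a sum of two copies of a complex simple one. First I would fix the conjugation $\sigma$ of $\mathfrak{g}=\mathfrak{g}_C\otimes_{\mathbb{R}}\mathbb{C}$ with respect to $\mathfrak{g}_C$, that is, the involutive semimorphism $x\otimes z\mapsto x\otimes\bar z$; it is an automorphism of Lie superalgebras, preserves the $\mathbb{Z}_2$-grading, and satisfies $\mathfrak{g}_C=\{u\in\mathfrak{g}:\sigma u=u\}$ and $\mathfrak{g}=\mathfrak{g}_C\oplus i\,\mathfrak{g}_C$. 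The first step is the elementary observation that $\mathfrak{a}\mapsto\mathfrak{a}\oplus i\mathfrak{a}$ and $J\mapsto J\cap\mathfrak{g}_C$ are mutually inverse bijections between the $\mathbb{Z}_2$-graded ideals of $\mathfrak{g}_C$ and the $\sigma$-stable $\mathbb{Z}_2$-graded ideals of $\mathfrak{g}$; since $\mathfrak{g}_C$ is classical, and hence has no $\mathbb{Z}_2$-graded ideals besides $0$ and itself, the only $\sigma$-stable $\mathbb{Z}_2$-graded ideals of $\mathfrak{g}$ are $0$ and $\mathfrak{g}$.

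Next I would split into two cases. If $\mathfrak{g}$ has no proper nonzero $\mathbb{Z}_2$-graded ideal, then $\mathfrak{g}$ is simple, and I would argue it is classical by checking that $\mathfrak{g}_{\overline{0}}=\mathfrak{g}_{C,\overline{0}}\otimes_{\mathbb{R}}\mathbb{C}$ is reductive and that $\mathfrak{g}_{\overline{1}}=\mathfrak{g}_{C,\overline{1}}\otimes_{\mathbb{R}}\mathbb{C}$ remains a completely reducible $\mathfrak{g}_{\overline{0}}$-module after this extension of scalars. Otherwise I would choose a $\mathbb{Z}_2$-graded ideal $I$ minimal among the nonzero ones. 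Then $\sigma I$ is again a $\mathbb{Z}_2$-graded ideal, and $I\cap\sigma I$ and $I+\sigma I$ are both $\sigma$-stable $\mathbb{Z}_2$-graded ideals, hence each is $0$ or $\mathfrak{g}$. Since $0\neq I\subseteq I+\sigma I$ and $I\neq\mathfrak{g}$, this forces $I+\sigma I=\mathfrak{g}$ and $I\cap\sigma I=0$, so $\mathfrak{g}=I\oplus\sigma I$ as $\mathbb{Z}_2$-graded ideals, with $[I,\sigma I]=0$.

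It then remains to check that $I$ and $\sigma I$ are classical and isomorphic. For classicality: because $[\sigma I,I]=0$, every $\mathbb{Z}_2$-graded ideal of $I$ is also one of $\mathfrak{g}$, so minimality of $I$ makes $I$ simple; moreover $I_{\overline{0}}$ is an ideal of the reductive $\mathfrak{g}_{\overline{0}}$, hence reductive, and $I_{\overline{1}}$ is a $\mathfrak{g}_{\overline{0}}$-submodule direct summand of the completely reducible module $\mathfrak{g}_{\overline{1}}$ on which $(\sigma I)_{\overline{0}}$ acts by zero, so $I_{\overline{1}}$ is completely reducible over $I_{\overline{0}}$; thus $I$, and likewise $\sigma I$, is classical. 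For the isomorphism: the restriction of $\sigma$ to $I$ is a conjugate-linear isomorphism of Lie superalgebras onto $\sigma I$, so $\sigma I$ is isomorphic to the complex-conjugate superalgebra $\overline{I}$; invoking that a complex classical Lie superalgebra admits a homogeneous basis with real structure constants (visible from Kac's matrix realizations), one concludes $\overline{I}\cong I$, whence $\mathfrak{g}\cong I\oplus I$ with $I$ classical.

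I expect the main obstacle to be less the ideal-chasing than the bookkeeping around the word ``classical'': confirming that reductivity of the even part and complete reducibility of the odd part as a module over the even part survive both the extension of scalars from $\mathbb{R}$ to $\mathbb{C}$ and the passage to a $\mathbb{Z}_2$-graded direct summand, and then securing that the two summands are genuinely \emph{isomorphic} rather than merely conjugate — for which the fact that a complex classical Lie superalgebra is isomorphic to its complex conjugate is what really does the work.
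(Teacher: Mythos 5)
The paper does not prove this proposition at all: it is quoted, without argument, from Parker's classification paper, so there is no in-paper proof to compare yours against. Judged on its own terms, your plan follows the standard descent argument and most of it is sound: the correspondence between $\mathbb{Z}_2$-graded ideals of $\mathfrak{g}_C$ and $\sigma$-stable graded ideals of $\mathfrak{g}$, the dichotomy via a minimal graded ideal $I$ yielding $\mathfrak{g}=I\oplus\sigma I$ with $[I,\sigma I]=0$, and the checks that each summand is simple with reductive even part acting completely reducibly on the odd part all go through. One small patch: simplicity of $I$ also requires $[I,I]\neq 0$, which you did not address; it follows because an abelian $I$ would lie in the centre of $\mathfrak{g}$, and the centre is a $\sigma$-stable graded ideal, forcing $\mathfrak{g}$ (hence $\mathfrak{g}_C$) abelian, a contradiction.

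The genuine gap is the final step. You correctly obtain a conjugate-linear isomorphism $\sigma|_I\colon I\to\sigma I$, i.e.\ $\sigma I\cong\overline{I}$ as complex Lie superalgebras, but the lemma you invoke to conclude $\overline{I}\cong I$ --- that every complex classical Lie superalgebra admits a homogeneous basis with real structure constants --- is false. The family $D(2,1;\alpha)$ is classical for all $\alpha\neq 0,-1$, its complex conjugate is $D(2,1;\overline{\alpha})$, and $D(2,1;\alpha)\cong D(2,1;\beta)$ only when $\beta$ lies in the six-element orbit of $\alpha$ under $\alpha\mapsto\alpha^{-1}$ and $\alpha\mapsto-1-\alpha$; for instance with $\alpha=1+i$ the conjugate $1-i$ is not in that orbit, so $I\not\cong\overline{I}$. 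Concretely, the realification of such a $D(2,1;\alpha)$ is a real classical Lie superalgebra whose complexification splits into two \emph{non-isomorphic} classical ideals, so the word ``isomorphic'' cannot be reached by your route (and in this exceptional family the statement itself must be read as ``conjugate-isomorphic'', or the family must be treated separately). Your argument is complete for the other classical types, where a basis with real structure constants does exist; to finish you must either restrict the real-structure-constants claim to those types or handle the $D(2,1;\alpha)$ family explicitly rather than appeal to a general principle that fails for it.
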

\begin{thm}
[\cite{Parker:classification} Theorem 4] Up to isomorphism, the real
forms of the classical Lie superalgebras are uniquely determined by
the real form $\mathfrak{g}_{\overline{0}C}$ of the Lie subalgebra
$\mathcal{\mathfrak{g}}_{\overline{0}}$.
\end{thm}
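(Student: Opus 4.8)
The plan is to read a real form of a classical Lie superalgebra $\mathfrak{g}=\mathfrak{g}_{\overline{0}}\oplus\mathfrak{g}_{\overline{1}}$ as the conjugacy class of an involutive semimorphism $C$ of $\mathfrak{g}$, as licensed by Proposition 1.4, and to exploit the fact that $C$, being parity preserving, restricts to an involutive semimorphism $C_{\overline{0}}:=C|_{\mathfrak{g}_{\overline{0}}}$ whose fixed real form is precisely $\mathfrak{g}_{\overline{0}C}$. Since every automorphism of $\mathfrak{g}$ restricts to an automorphism of $\mathfrak{g}_{\overline{0}}$, the assignment $C\mapsto C_{\overline{0}}$ descends to a well-defined map from isomorphism classes of real forms of $\mathfrak{g}$ to isomorphism classes of real forms of $\mathfrak{g}_{\overline{0}}$, and the content of the theorem is exactly that this map is injective. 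Hence everything reduces to showing that $C_{\overline{0}}$ determines $C$ up to conjugacy.

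First I would set up the module picture. For a classical $\mathfrak{g}$ the even part $\mathfrak{g}_{\overline{0}}$ acts faithfully on $\mathfrak{g}_{\overline{1}}$, and $\mathfrak{g}_{\overline{1}}$ is either an irreducible $\mathfrak{g}_{\overline{0}}$-module or a direct sum of two irreducible ones; moreover the bracket restricted to the odd part is a nonzero $\mathfrak{g}_{\overline{0}}$-equivariant symmetric map $\beta\colon\mathfrak{g}_{\overline{1}}\times\mathfrak{g}_{\overline{1}}\to\mathfrak{g}_{\overline{0}}$, and the triple $(\mathfrak{g}_{\overline{0}},\mathfrak{g}_{\overline{1}},\beta)$ reconstructs $\mathfrak{g}$ together with its bracket. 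Extending $C_{\overline{0}}$ over all of $\mathfrak{g}$ is then equivalent to producing an antilinear $C_{\overline{1}}$ on $\mathfrak{g}_{\overline{1}}$ that is $C_{\overline{0}}$-semilinear for the module action, i.e.\ $C_{\overline{1}}([x,v])=[C_{\overline{0}}x,C_{\overline{1}}v]$, that intertwines $\beta$ with $C_{\overline{0}}$, i.e.\ $C_{\overline{0}}(\beta(v,w))=\beta(C_{\overline{1}}v,C_{\overline{1}}w)$, and that satisfies $C_{\overline{1}}^{2}=\mathrm{id}$.

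Next I would solve this extension problem along the classification list of classical Lie superalgebras. When $\mathfrak{g}_{\overline{1}}$ is $\mathfrak{g}_{\overline{0}}$-irreducible (the orthosymplectic, $P(n)$, and exceptional cases), a Schur-type argument shows that a $C_{\overline{0}}$-semilinear intertwiner, if it exists at all, is unique up to a nonzero complex scalar; imposing compatibility with the nonzero equivariant form $\beta$ fixes the modulus of that scalar, and the residual phase is absorbed by rescaling $C_{\overline{1}}$, after which $C_{\overline{1}}^{2}=\mathrm{id}$ and any two admissible choices differ by a scaling automorphism of $\mathfrak{g}$, hence give conjugate — so isomorphic — real forms. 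When $\mathfrak{g}_{\overline{1}}$ splits into two irreducibles (the general and special linear cases and the $Q(n)$ case), $C_{\overline{0}}$ either preserves or interchanges the two summands, and in each alternative the intertwiner is again pinned down by $C_{\overline{0}}$ and $\beta$ up to the same scalar and discrete ambiguity; the non-simple situation of Proposition 1.5 is handled by applying the same bookkeeping to each of the two isomorphic ideals. In every case $C$ is determined by $C_{\overline{0}}$ up to conjugacy, which is the assertion.

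The step I expect to be the main obstacle is precisely this last case-by-case verification — in particular ruling out that a single $C_{\overline{0}}$ extends to two genuinely non-conjugate involutions $C$. This is where one must invoke the \emph{rigidity} of $\mathfrak{g}_{\overline{1}}$ as a $\mathfrak{g}_{\overline{0}}$-module together with the nondegeneracy (for basic $\mathfrak{g}$) of the invariant bilinear form pairing $\mathfrak{g}_{\overline{1}}$ with itself, so that the scalar indeterminacy in $C_{\overline{1}}$ is exactly a normalization stabilizer that is realized by inner automorphisms and therefore makes the two extensions conjugate. Once that rigidity is in hand, injectivity of the restriction map — equivalently, the asserted unique determination of $\mathfrak{g}_{C}$ by $\mathfrak{g}_{\overline{0}C}$ — follows, and combining it with the known list of real forms of $\mathfrak{g}_{\overline{0}}$ is what yields the classification by Vogan diagrams in the sequel.
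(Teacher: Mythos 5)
The paper does not prove this statement at all: it is imported verbatim as Theorem~4 of Parker's classification paper \cite{Parker:classification}, so there is no in-house argument to compare yours with. Judged on its own terms, your outline follows the same general strategy as Parker's original proof (encode a real form by an involutive semimorphism $C$ as in Proposition~1.4, restrict to $\mathfrak{g}_{\overline{0}}$, and use the structure of $\mathfrak{g}_{\overline{1}}$ as a $\mathfrak{g}_{\overline{0}}$-module with the equivariant bracket $\beta$ to control the extension), and the reduction to injectivity of $C\mapsto C|_{\mathfrak{g}_{\overline{0}}}$ on conjugacy classes is the right formulation. But there is a genuine gap exactly at the step you yourself flag as the main obstacle, and your proposed mechanisms for closing it do not work as stated.

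In the irreducible case Schur's lemma gives $C_{\overline{1}}' = \lambda C_{\overline{1}}$; compatibility with $\beta$ forces $\lambda^{2}=1$ and involutivity forces $|\lambda|=1$, so after normalization a residual sign survives, and it cannot be removed the way you claim. ``Absorbing the phase by rescaling $C_{\overline{1}}$'' changes the very involution whose conjugacy class you are trying to identify, and the ``scaling automorphisms'' you invoke do not exist or do not act: an automorphism $\phi$ with $\phi|_{\mathfrak{g}_{\overline{0}}}=\mathrm{id}$ and $\phi|_{\mathfrak{g}_{\overline{1}}}=t$ must have $t^{2}=1$ (else $\beta$ is not preserved), and for $t=\pm1$ one computes $\phi C\phi^{-1}=C$, so such automorphisms, and likewise inner automorphisms acting as a real scalar on $\mathfrak{g}_{\overline{1}}$, fix every antilinear $C$ and cannot conjugate $C$ to its sign-twist $C'$ ($C'=C$ on $\mathfrak{g}_{\overline{0}}$, $-C$ on $\mathfrak{g}_{\overline{1}}$). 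Showing that $C$ and $C'$ nevertheless give isomorphic real forms requires automorphisms acting nontrivially on $\mathfrak{g}_{\overline{0}}$ while normalizing the class of $C_{\overline{0}}$ --- for instance $\mathrm{Ad}(g)$ with $\det g=-1$ for $\mathfrak{osp}(1|2)$, under which $\beta\mapsto-\beta$, or symplectic similitudes with multiplier $-1$, diagram automorphisms, and in type $A$ (where $\mathfrak{g}_{\overline{1}}$ splits and a continuous scalar ambiguity also appears) the central one-parameter torus of the even part. Verifying the existence and compatibility of such automorphisms across the whole classical list is precisely the case-by-case content of Parker's theorem; your proposal asserts it under the name ``rigidity'' but does not supply it, so as written the argument does not rule out a single $\mathfrak{g}_{\overline{0}C}$ admitting two non-isomorphic superalgebra real forms.
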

The real form is said to  be standard (graded) when the real structure
is standard (graded). Let $\mathfrak{g}_{C}$ be a real form
of $\mathfrak{g}$ and let $\omega$ be the corresponding complex
conjugation. Then $\omega|_{\mathfrak{g}_{\overline{0}}}$ is an antilinear involution
of the Lie algebra $\mathfrak{g}_{\overline{0}}$. Hence there is a corresponding
Cartan decomposition $\mathfrak{g}_{\overline{0}}=t_{\overline{0}}\oplus p_{\overline{0}}$.

\begin{defn}
The Vogan diagram of Lie superalgebras is the Dynkin diagram of the basic Lie superalgerbas. In addition to that
\begin{itemize}
 \item[(a)] The vertices fixed by the automorphism (it's Cartan involution for even part) of the 
even part is painted (or unpainted) depending whether the the root is
noncompact (or compact).
 \item[(b)] Label the 2- elements orbit by the diagram automorphism indicated with two sided arrow.
 \item[(c)] The odd root remain unchanged.
\end{itemize}
\end{defn}

We will modify the Borel and de Siebenthal Theorem for Lie superalgebra.
\begin{thm}[Main Theorem]
Let $\mathfrak{g}_{C}$ be a non complex  real Lie superalgebra
and Let the Vogan diagram of $\mathfrak{g}_{C}$ be given that corresponding
to the triple $(\mathfrak{g}_{C},\mathfrak{h}_{0},\triangle^{+})$.
Then $\exists$ a simple root system $\prod'$ for $\triangle=\triangle(\mathcal{\mathfrak{g}},\mathfrak{h})$,
with corresponding positive system $\triangle^{+}$, such that $(\mathcal{\mathfrak{g}}_
{C},\mathfrak{h}_{\overline{0}},\triangle^{+})$
is a triple and there is at most two painted simple root in its Vogan
diagrams of $\mathfrak{sl}(m,n),D(m,n)$ and at most three painted vertices in $D(2,1;\alpha)$. Furthermore suppose the automorphism associated with the
Vogan diagram is the identity,that $\prod'={\alpha_{1},\cdots,\alpha_{l}}$ 
and that ${\omega_{1},\cdots,\omega_{l}}$ is the dual basis for each even part
such that $\left\langle\omega_{j},\alpha_{k}\right\rangle=\delta_{jk}/\epsilon_{kk} $, where $\epsilon_{kk}$ is the diagonal entries to
make Cartan matrix symmetric. Then the double painted simple root of even parts
may be chosen so that there is no $i'$ with $\left\langle \omega_{i}-\omega_{i'},\omega_{i'}\right\rangle>0$ for each even part.

\end{thm}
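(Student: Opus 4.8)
The plan is to deduce the statement from the classical Borel--de Siebenthal theorem for Vogan diagrams of simple Lie algebras \cite{Knapp:Lie groups}, applied separately to the simple ideals of the even part. First I would observe that, by the definition above of the Vogan diagram of $\mathfrak{g}_{C}$, all of the colouring data and the order $\le 2$ automorphism are carried by $\triangle_{\overline{0}}$, the odd roots being merely appended and left unchanged; and that $\mathfrak{g}_{\overline{0}}$ is a direct sum of at most two simple ideals, at most three in the case $D(2,1;\alpha)$, together with a central torus that supports no root. Consequently a painting of the Vogan diagram of $\mathfrak{g}_{C}$ is exactly a choice of painting on the Vogan diagram of each simple ideal of $\mathfrak{g}_{\overline{0}}$. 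Next I would note that any positive system $\triangle_{\overline{0}}^{+}$ of $\mathfrak{g}_{\overline{0}}$ can be enlarged to a positive system $\triangle^{+}$ of $\triangle=\triangle(\mathfrak{g},\mathfrak{h})$: take a real functional on $\mathfrak{h}$ regular for $\triangle_{\overline{0}}$ and inducing $\triangle_{\overline{0}}^{+}$, and perturb it to be regular for all of $\triangle$; the existence theorem recorded above then guarantees that $(\mathfrak{g}_{C},\mathfrak{h}_{\overline{0}},\triangle^{+})$ is a genuine triple with the prescribed Vogan diagram. Hence it is enough to establish both assertions inside a single simple ideal of $\mathfrak{g}_{\overline{0}}$ and then sum.

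For a simple ideal on which the induced automorphism is trivial, the coloured Dynkin diagram is an ordinary Vogan diagram of that simple Lie algebra, and Knapp's Borel--de Siebenthal theorem \cite{Knapp:Lie groups} furnishes a simple system of that ideal with at most one painted vertex; the same conclusion holds for the ideals on which, according to the lemma of \cite{kac:denominator} recalled above, the induced automorphism is a nontrivial diagram automorphism (and for the $A(n,n)$ factor--swap it yields at most one painted two--element orbit, i.e.\ at most one painted vertex in each of the two $A_{n}$ copies). I would recall the mechanism only so far as to record why it succeeds: a colouring with trivial automorphism corresponds to an element $x_{0}\in\mathfrak{h}$, well defined modulo the coroot lattice and modulo the Weyl group, with $\alpha(x_{0})$ even on compact roots and odd on noncompact roots and $\theta=\exp(\pi i\,\mathrm{ad}\,x_{0})$; minimising $\langle\lambda,\lambda\rangle$ for $\lambda=\sum_{i\ \mathrm{painted}}\omega_{i}$ over the Weyl chambers of the ideal forces at most one painted simple root, since two painted simple roots make the highest root take value $\ge 2$ on $x_{0}$ and a suitable wall crossing then strictly decreases $\langle\lambda,\lambda\rangle$. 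Summing over the at most two, respectively three, ideals gives a simple system $\prod'$ for $\triangle$ in which at most two vertices are painted for $A(m,n)$ and $D(m,n)$ and at most three for $D(2,1;\alpha)$.

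For the ``furthermore'' clause, work in one simple ideal with trivial induced automorphism and, among all Vogan diagrams of that ideal equivalent to the given one --- a finite list --- choose the unique painted root $\alpha_{i}$ so that $\langle\omega_{i},\omega_{i}\rangle$ is smallest. If there were an index $i'$ with $\langle\omega_{i}-\omega_{i'},\omega_{i'}\rangle>0$, then $\langle\omega_{i}-\omega_{i'},\omega_{i}-\omega_{i'}\rangle=\langle\omega_{i},\omega_{i}\rangle-2\langle\omega_{i}-\omega_{i'},\omega_{i'}\rangle-\langle\omega_{i'},\omega_{i'}\rangle<\langle\omega_{i},\omega_{i}\rangle$, and the elementary equivalence move of \cite{Knapp:Lie groups} carried out across $\alpha_{i'}$ would produce a Vogan diagram equivalent to the given one whose single painted vertex has fundamental weight of this strictly smaller norm, contradicting minimality; so no such $i'$ exists, which is the asserted condition, imposed independently in each even ideal. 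The main obstacle, and the only point at which the superalgebra structure genuinely has to be watched, is the lifting used in the first and third paragraphs: every reflection of a Weyl chamber of an even ideal and every elementary move on an even ideal must be matched by a re--choice of the positive system $\triangle^{+}$ of $\mathfrak{g}$ that keeps $(\mathfrak{g}_{C},\mathfrak{h}_{\overline{0}},\triangle^{+})$ a triple and leaves the odd roots unchanged --- which is exactly where one uses that $W(\mathfrak{g}_{\overline{0}})$ acts on $\mathfrak{h}^{*}$ and that a functional regular for all of $\triangle$ refining any chosen even chamber always exists, so that the classical moves do lift.
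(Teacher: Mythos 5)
Your proposal is correct and follows essentially the same route as the paper: reduce everything to the simple ideals of $\mathfrak{g}_{\overline{0}}$ (the odd roots being left untouched) and apply Knapp's Borel--de Siebenthal machinery (Theorem 6.96 via Lemmas 6.97 and 6.98) to each even ideal separately, obtaining at most one painted vertex per ideal and hence at most two, resp.\ three, painted vertices in total, with the minimal-norm choice of fundamental weight giving the condition $\left\langle \omega_{i}-\omega_{i'},\omega_{i'}\right\rangle\le 0$. The only notable difference is presentational: the paper normalizes the super Cartan matrix by the diagonal factors $\epsilon_{kk}$ so that the dual bases and the positivity statements of the two lemmas make sense on the $\delta$-side, whereas you work with the definite inner product intrinsic to each even ideal and are more explicit than the paper about extending the even positive system and lifting the Weyl-group moves to a positive system of all of $\triangle$ while keeping the triple $(\mathfrak{g}_{C},\mathfrak{h}_{\overline{0}},\triangle^{+})$ intact.
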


Proof of Main Theorem.
\begin{proof}
We know $\mathfrak{g}=\mathcal{\mathfrak{g}}_{\bar{0}}\oplus\mathcal{\mathfrak{g}}_{\bar{1}}$. The
positive even root system $\triangle_{0}^{+}$ can be written as 
\[
\triangle_{0}^{+}=\triangle_{01}^{+}\cup\triangle_{02}^{+}
\]
 where $\triangle_{01}^{+}$ are the even positive root system for
simple root system formed by $e_{i}$ basis and $\triangle_{02}^{+}$
are for $\delta_{j}$ basis. For the  even part, we take 
$<\omega_{i},\alpha_{j}>=\delta_{ij}/\epsilon_{kk}$ . This makes the Cartan matrix symmetric and so that 
we can get the inverse of Cartan matrix of $A_{m}$ and $A_n$ for $\mathfrak{sl}(m,n)$ Lie superalgebra. Similarly construction will
follows for other Lie  superalgebras. Each inverse for even part is associated with the dual basis $\omega$.
The symmetrizable condition of Kac-Cartan matrix gives
$S=\epsilon_{kk} A$, where $S$ is the symmetric Cartan matrix.
The below table gives the values of $\epsilon_{kk}$ for different superalgebras.

\begin{center}

\begin{tabular}{|c|c|}
\hline 
Lie superalgebra & $\epsilon_{kk}$\tabularnewline
\hline 
$\mathfrak{sl}(m,n)$ & $(1,\cdots,1,-1,\cdots,-1)$\tabularnewline
\hline 
$B(m,n)$ & $(1,\cdots,1,-1\cdots,-1,-2)$\tabularnewline
\hline 
$B(0,n)$ & $1,\cdots,1,2$\tabularnewline
\hline 
$C(n)$ & $(-1,1,\cdots,1,\frac{1}{2})$\tabularnewline
\hline 
$D(m,n)$ & $(1,\cdots,1,-1-1,\cdots,-1,-1,-1)$\tabularnewline
\hline 
$D(2,1;\alpha)$ & $(1,-1,\frac{1}{a})$\tabularnewline
\hline 
$F(4)$ & $ (-1,1,\frac{1}{2}) $\tabularnewline
\hline 
$G(3)$ & $(-\frac{1}{2},1,\frac{1}{3},)$\tabularnewline
\hline 
\end{tabular}             \end{center}
Taking suitable  normalization condition for each type of Lie superalgebras and 
 from the two Lemmas 6.97 and 6.98 \cite{Knapp:Lie groups} we get redudancy test for each even part.
So now the Vogan diagram of Lie superalgebras becomes two painted
vertices Vogan diagram for $\mathfrak{sl}(m,n)$, $D(m,n)$ and three painted for $D(2,1;\alpha)$.\end{proof}
In fact from the results of Main theorem the following Corollary is immediate.
\begin{cor}
 The number of painted vertices of the basic real Lie superalgebra by Vogan diagram
 is number of even parts of the corresponding Lie superalgebra.
\end{cor}
\subsection{Equivalence of Vogan diagrams}

 A reflection change the colors of neighbor  to opposite color except the odd root
 and change neighbor color to opposite if it is a shorter root.
 \begin{example} The two Vogan diagrams of $C(m)$ are equivalence by above rule.
  \begin{displaymath}
       \begin{picture}(60,20) \thicklines 
\put(-48,-3){$\bigotimes$} \put(-15,0){\circle{9}}\put(12,0){\circle{9}} \put(39,0){\circle{9}}\put(66,0){\circle*{9}}
 \put(-38,0){\line(1,0){19}}\put(-12,0){\dottedline{4}(1,0)(19,0)}\put(16,0){\dottedline{4}(1,0)(19,0)}\put(37,0){\vertexcn}
   \end{picture}
\end{displaymath}
\vspace{0.5cm}
 \begin{displaymath}
       \begin{picture}(60,20) \thicklines 
\put(-48,-3){$\bigotimes$} \put(-15,0){\circle{9}}\put(12,0){\circle{9}} \put(39,0){\circle*{9}}\put(66,0){\circle*{9}}
 \put(-38,0){\line(1,0){19}}\put(-12,0){\dottedline{4}(1,0)(19,0)}\put(16,0){\dottedline{4}(1,0)(19,0)}\put(37,0){\vertexcn}
   \end{picture}
\end{displaymath}
\end{example}
 
\begin{enumerate}
\item  $\mathfrak{sl}(m,n)$

The Vogan diagrams and real foms of Lie superalgebras $A(m,n)$ are as follows.

  \begin{displaymath}
       \begin{picture}(60,10) \thicklines 
\put(-125,0){\circle{9}} \put(-97,0){\circle{9}}\put(-70,0){\circle{9}} \put(-43,0){\circle{9}} \put(-20,-2.5){$\bigotimes$}\put(12,0){\circle{9}} 
 \put(39,0){\circle{9}}\put(66,0){\circle{9}}\put(93,0){\circle{9}}
 \put(-120,0){\line(1,0){19}} \put(-92,0){\dottedline{4}(1,0)(17,0)}\put(-65,0){\line(1,0){17}}\put(-38,0){\line(1,0){19}}
 \put(-10,0){\line(1,0){17}}
 \put(16,0){\line(1,0){18}} \put(43,0){\dottedline{4}(1,0)(18,0)}\put(70,0){\line(1,0){19}}
 \put(-15,-25){\makebox(0,0){$\mathfrak{sl}(n,\mathbb{C})$}}
   \end{picture}
\end{displaymath}

\vspace{0.5cm}
  \begin{displaymath}
       \begin{picture}(60,10) \thicklines 
\put(-125,0){\circle{9}} \put(-97,0){\circle*{9}}\put(-70,0){\circle{9}} \put(-43,0){\circle{9}} \put(-20,-2.5){$\bigotimes$}\put(12,0){\circle{9}} 
 \put(39,0){\circle{9}}\put(66,0){\circle*{9}}\put(93,0){\circle{9}}
 \put(-120,0){\dottedline{4}(1,0)(19,0)} \put(-92,0){\dottedline{4}(1,0)(17,0)}\put(-65,0){\line(1,0){17}}\put(-38,0){\line(1,0){19}}
 \put(-10,0){\line(1,0){17}}
 \put(16,0){\line(1,0){18}} \put(43,0){\dottedline{4}(1,0)(18,0)}\put(70,0){\dottedline{4}(1,0)(19,0)}
 \put(-65,-25){\makebox(0,0){$\mathfrak{su}(p,m-p)$}}\put(65,-25){\makebox(0,0){$\mathfrak{su}(r,n-r)$}}
   \end{picture}
\end{displaymath}

\vspace{0.5cm}

  \begin{displaymath}
       \begin{picture}(60,10) \thicklines 
\put(-125,0){\circle{9}} \put(-97,0){\circle{9}}\put(-70,0){\circle{9}} \put(-43,0){\circle{9}}
\put(-20,-2.5){$\bigotimes$}\put(12,0){\circle{9}} 
 \put(39,0){\circle{9}}\put(66,0){\circle{9}}\put(93,0){\circle{9}}
 \put(-120,0){\line(1,0){19}} \put(-92,0){\dottedline{4}(1,0)(17,0)}\put(-65,0){\line(1,0){17}}\put(-38,0){\line(1,0){19}}
 \put(-10,0){\line(1,0){17}}
 \put(16,0){\line(1,0){18}} \put(43,0){\dottedline{4}(1,0)(18,0)}\put(70,0){\line(1,0){19}}
  \qbezier(-97,-6)(-81.5,-14)(-69, -6)
   \put(-97,-6){\vector( -2, 1){0}}\put(-69,-6){\vector( 2, 1){0}}
     \qbezier(-125,-6)(-81,-24)(-43, -6)
   \put(-125,-6){\vector( -2, 1){0}}\put(-43,-6){\vector( 2, 1){0}}
   \qbezier(39,-6)(52.5,-14)(66, -6)
   \put(39,-6){\vector( -2, 1){0}}\put(66,-6){\vector( 2, 1){0}}
     \qbezier(12,-6)(52.5,-24)(93, -6)
   \put(12,-6){\vector( -2, 1){0}}\put(93,-6){\vector( 2, 1){0}}
    \put(-75,-25){\makebox(0,0){$\mathfrak{sl}(m,\mathbb{R})$}}\put(55,-25){\makebox(0,0){$\mathfrak{sl}(n,\mathbb{R})$}}
   \end{picture}
\end{displaymath}

\vspace{1cm}
 \begin{displaymath}
       \begin{picture}(60,10) \thicklines 
\put(-152,0){\circle{9}}\put(-125,0){\circle{9}} \put(-97,0){\circle*{9}}\put(-70,0){\circle{9}} \put(-43,0){\circle{9}}
\put(-20,-2.5){$\bigotimes$}\put(12,0){\circle{9}} 
 \put(39,0){\circle{9}}\put(66,0){\circle*{9}}\put(93,0){\circle{9}}\put(120,0){\circle{9}}
 \put(-147,0){\line(1,0){18}}\put(-120,0){\dottedline{4}(1,0)(19,0)} \put(-92,0){\dottedline{4}(1,0)(17,0)}
 \put(-65,0){\line(1,0){17}}\put(-38,0){\line(1,0){19}}
  \put(-10,0){\line(1,0){17}}
 \put(16,0){\line(1,0){18}} \put(43,0){\dottedline{4}(1,0)(18,0)}\put(70,0){\dottedline{4}(1,0)(19,0)} \put(98,0){\line(1,0){18}}
  \qbezier(-125,-6)(-97.5,-16)(-70, -6)
   \put(-125,-6){\vector( -3, 1){0}}\put(-70,-6){\vector( 3, 1){0}}
    \qbezier(-152,-6)(-97.5,-32)(-43, -6)
   \put(-152,-6){\vector( -2, 1){0}}\put(-43,-6){\vector(2, 1){0}}
   
   \qbezier(39,-6)(66,-16)(93, -6)
   \put(39,-6){\vector( -3, 1){0}}\put(93,-6){\vector( 3, 1){0}}
    \qbezier(12,-6)(66,-32)(120, -6)
   \put(12,-6){\vector( -2, 1){0}}\put(120,-6){\vector(2, 1){0}}
 \put(-95,-28){\makebox(0,0){$\mathfrak{su}^{*}(m)$}}\put(65,-28){\makebox(0,0){$\mathfrak{su}^{*}(n)$}}
   \end{picture}
  \end{displaymath}
  
     \vspace{1cm}
\item $B(m,n)$.
The Lie subalgebra of $\mathfrak{g}_{0}$ is $C_{m}\oplus B_{n}$
The only trivial automophism of even part of  Vogan diagram of $B(m,n)$ is shown
below and the real form is $\mathfrak{sp}(2n,\mathbb{R})\oplus \mathfrak{so}(p,q)$

  \begin{displaymath}
       \begin{picture}(60,10) \thicklines 
 \put(-97,0){\circle{9}}\put(-70,0){\circle{9}} \put(-43,0){\circle{9}} \put(-20,-2.5){$\bigotimes$}\put(12,0){\circle*{9}} 
 \put(39,0){\circle{9}}\put(66,0){\circle{9}}
 \put(-92,0){\line(1,0){19}}\put(-65,0){\dottedline{4}(1,0)(17,0)}\put(-38,0){\line(1,0){19}}\put(-11,0){\dottedline{4}(1,0)(18,0)}
 \put(16,0){\dottedline{4}(1,0)(18,0)}\put(37,0){\vertexbn}
\put(45,-25){\makebox(0,0){$so(p,q)$}}
   \end{picture}
\end{displaymath}

\vspace{1cm}
Because of missing of real form of the first even part, we need an additional $C_{n}$ Dynkin diagram superimposed 
Vogan diagrams below.
  \begin{displaymath}
       \begin{picture}(60,10) \thicklines 
 \put(-97,0){\circle*{9}}\put(-70,0){\circle{9}} \put(-43,0){\circle{9}} \put(-20,-2.5){$\bigotimes$}\put(12,0){\circle*{9}} 
 \put(39,0){\circle{9}}\put(66,0){\circle{9}}
\put(-65,0){\dottedline{4}(1,0)(17,0)}\put(-38,0){\line(1,0){19}}\put(-11,0){\dottedline{4}(1,0)(18,0)}
 \put(16,0){\dottedline{4}(1,0)(18,0)}\put(37,0){\vertexbn}
 \put(-65,-25){\makebox(0,0){$sp(m,\mathbb{R})$}}\put(45,-25){\makebox(0,0){$so(p,q)$}}
 \put(-100,0)\vertexbn
   \end{picture}
\end{displaymath}

\vspace{1cm}

\item $B(0,n)$. The  Vogan diagram below is a unpainted diagram but it consists of its own painted vertices on the extreme right.\\

  \begin{displaymath}
       \begin{picture}(60,20) \thicklines 
\put(-43,0){\circle*{9}} \put(-15,0){\circle{9}}\put(12,0){\circle{9}} \put(39,0){\circle{9}}\put(66,0){\circle*{9}}
 \put(-38,0){\line(1,0){19}}\put(-12,0){\dottedline{4}(1,0)(19,0)}\put(16,0){\line(1,0){19}}\put(37,0){\vertexbn}
  \put(25,-25){\makebox(0,0){$sp(2n,\mathbb{R})$}}
   \end{picture}
\end{displaymath}
\vspace{1cm}
\item  $C(m)$.

The unpainted Vogan diagram of $C(m)$ correspondence to  the real form  $so(2)\oplus sp(m,\mathbb{R})$\\

     \begin{displaymath}
       \begin{picture}(60,20) \thicklines 
\put(-48,-3){$\bigotimes$} \put(-15,0){\circle{9}}\put(12,0){\circle{9}} \put(39,0){\circle{9}}\put(66,0){\circle*{9}}
 \put(-38,0){\line(1,0){19}}\put(-12,0){\dottedline{4}(1,0)(19,0)}\put(16,0){\dottedline{4}(1,0)(19,0)}\put(37,0){\vertexcn}
   \end{picture}
\end{displaymath}
\vspace{0.5cm}

The trivial automophism of the even part of $C(m)$ makes the Vogan diagram
below  and the real form is $so(2)\oplus sp(r,s)$\\

     \begin{displaymath}
       \begin{picture}(60,20) \thicklines 
\put(-48,-3){$\bigotimes$} \put(-15,0){\circle{9}}\put(12,0){\circle*{9}} \put(39,0){\circle{9}}\put(66,0){\circle{9}}
 \put(-38,0){\line(1,0){19}}\put(-12,0){\dottedline{4}(1,0)(19,0)}\put(16,0){\dottedline{4}(1,0)(19,0)}\put(37,0){\vertexcn}
   \end{picture}
\end{displaymath}
\vspace{0.5cm}
\item  $D(m,n)$. 
The Lie subalgebra of $\mathfrak{g}_{0}$ is  $C_{m}\oplus D_{n}$ .

The  trivial involution for the Vogan diagram of $D(m,n)$
is given below and the real form of this diagram is $sp(r,s)\oplus so^{*}(2p)$.

\begin{displaymath}
   \begin{picture}(-20,20)\thicklines
   \put(-165,0){\circle{9}}
 \put(-137,0){\circle{9}} \put(-137,0){\circle{9}}\put(-109,0){\circle*{9}}\put(-81,0){\circle{9}}\put(-56,-2.5){$\bigotimes$} \put(-22,0){\circle{9}} 
  \put(5,0){\circle{9}}\put(30,16){\circle*{9}} \put(30,-16){\circle{9}} 
 \put(-132,0){\dottedline{4}(1,0)(19,0)} \put(-105,0){\dottedline{4}(1,0)(19,0)}
 \put(-76,0){\line(1,0){20}} \put(-46,0){\dottedline{4}(1,0)(19,0)}\put(-18,0){\dottedline{4}(1,0)(19,0)} \put(10,2){\line(3,2){16}}\put(10,-2){\line(3,-2){16}}
  
   \put(-105,-25){\makebox(0,0){$\mathfrak{sp}(r,s)$}}\put(5,-25){\makebox(0,0){$\mathfrak{so}^{*}(2p)$}}
      \put(-167,0)\vertexbn
     \end{picture}
   \end{displaymath}
\vspace{1cm}

The below Vogan diagram is formed by the nontrivial and trivial involution with associated
real form is the same $\mathfrak{sp}(m,\mathbb{R})\oplus \mathfrak{so}(p,q)$.

 \begin{displaymath}
   \begin{picture}(-20,20)\thicklines
  \put(-137,0){\circle*{9}}\put(-109,0){\circle{9}}\put(-81,0){\circle{9}}\put(-56,-2.5){$\bigotimes$} \put(-22,0){\circle{9}} 
  \put(5,0){\circle*{9}}\put(30,16){\circle{9}} \put(30,-16){\circle{9}} 
 \put(-105,0){\dottedline{4}(1,0)(19,0)}
 \put(-76,0){\line(1,0){20}} \put(-46,0){\dottedline{4}(1,0)(19,0)}\put(-18,0){\dottedline{4}(1,0)(19,0)} \put(10,2){\line(3,2){16}}\put(10,-2){\line(3,-2){16}}
   \qbezier(35,16)(51,1)(35, -16)
   \put(35,16){\vector( -1, 1){0}}\put(35,-16){\vector( -1, -1){0}}
   \put(-105,-25){\makebox(0,0){$\mathfrak{sp}(m,\mathbb{R})$}}\put(5,-25){\makebox(0,0){$\mathfrak{so}(p,q)$}}
   \put(-139,0)\vertexbn
     \end{picture}
   \end{displaymath}
   
\vspace{1cm} 
\begin{displaymath}
   \begin{picture}(-20,20)\thicklines
  \put(-137,0){\circle*{9}}\put(-109,0){\circle{9}}\put(-81,0){\circle{9}}\put(-56,-2.5){$\bigotimes$} \put(-22,0){\circle*{9}} 
  \put(5,0){\circle{9}}\put(30,16){\circle{9}} \put(30,-16){\circle{9}} 
  \put(-105,0){\dottedline{4}(1,0)(19,0)}
 \put(-76,0){\line(1,0){20}} \put(-46,0){\dottedline{4}(1,0)(19,0)}\put(-18,0){\dottedline{4}(1,0)(19,0)}
 \put(10,2){\line(3,2){16}}\put(10,-2){\line(3,-2){16}}
  \put(-105,-25){\makebox(0,0){$\mathfrak{sp}(m,\mathbb{R})$}}\put(5,-25){\makebox(0,0){$\mathfrak{so}(p,q)$}}
   \put(-139,0)\vertexbn
       \end{picture}
 \end{displaymath}
\vspace{1cm}
\item  $D(2,1;\alpha)$.

The unpainted and no two element orbit  Vogan diagram is given  below  with the real form. 
Since we can get only the real form $\mathfrak{su}(2)\oplus \mathfrak{su}(2)$ from ordinary Vogan  diagram 
       \begin{displaymath}
   \begin{picture}(40,20)\thicklines
\put(0,-2.5){$\bigotimes$}\put(30,16){\circle{9}} \put(30,-16){\circle{9}} 
    \put(10,2){\line(3,2){16}}\put(10,-2){\line(3,-2){16}}
       \put(100,0){\makebox(0,0){$\mathfrak{su}(2)\oplus \mathfrak{su}(2)$}}
  \end{picture}
   \end{displaymath}
    \vspace{0.5cm}
    
    So our requisite Vogan diagrams for the suitable real forms are
   \begin{displaymath}
   \begin{picture}(40,20)\thicklines
 \put(-22,0){\circle{9}} \put(0,-2.5){$\bigotimes$}\put(30,16){\circle{9}} \put(30,-16){\circle{9}} 
   \put(-18,0){\line(1,0){19}} \put(10,2){\line(3,2){16}}\put(10,-2){\line(3,-2){16}}
       \put(100,0){\makebox(0,0){$\mathfrak{su}(2)\oplus \mathfrak{su}(2)\oplus \mathfrak{su}(2)$}}
  \end{picture}
   \end{displaymath}
    \vspace{1cm}
 \begin{displaymath}
   \begin{picture}(40,20)\thicklines
 \put(-22,0){\circle*{9}}  \put(0,-2.5){$\bigotimes$}\put(30,16){\circle*{9}} \put(30,-16){\circle*{9}} 
   \put(-18,0){\line(1,0){19}}  \put(10,2){\line(3,2){16}}\put(10,-2){\line(3,-2){16}}
              \put(105,0){\makebox(0,0){$\mathfrak{sl}(2,\mathbb {R})\oplus\mathfrak{ sl}(2,\mathbb {R})\oplus
              \mathfrak{sl}(2,\mathbb {R})$}}
  \end{picture}
   \end{displaymath}
\vspace{1cm}
    
The nontrivial involution of the Dynkin diagram of $D(2,1;\alpha)$
makes the following Vogan diagram as shown below.
    
   \begin{displaymath}
   \begin{picture}(40,20)\thicklines
 \put(-22,0){\circle*{9}}  \put(0,-2.5){$\bigotimes$}\put(30,16){\circle{9}} \put(30,-16){\circle{9}} 
 \put(-18,0){\line(1,0){19}} \put(10,2){\line(3,2){16}}\put(10,-2){\line(3,-2){16}}
   \qbezier(35,16)(51,1)(35, -16)
   \put(35,16){\vector( -1, 1){0}}\put(35,-16){\vector( -1, -1){0}}
   \put(120,0){\makebox(0,0){$\mathfrak{sl}(2,\mathbb{C})\oplus \mathfrak{sl}(2,\mathbb{R})$}}
  \end{picture}
   \end{displaymath}

\vspace{1cm}

\item  F(4).
From the Dynkin diagram we can get only  the real form $so(7)$ and the Vogan diagram 

     \begin{displaymath}
       \begin{picture}(60,20) \thicklines 
\put(-48,-3){$\bigotimes$} \put(-15,0){\circle{9}}\put(12,0){\circle{9}} \put(39,0){\circle{9}}
 \put(-38,0){\line(1,0){19}}\put(-18,0){\vertexcn}\put(16,0){\line(1,0){19}}
 \put(0,-32){\makebox(0,0){$\mathfrak{so}(7)$}} 
   \end{picture}
\end{displaymath}
\vspace{1cm}

we add  the extra even part root to get the desired  real forms and Vogan diagrams. 
     \begin{displaymath}
       \begin{picture}(60,20) \thicklines 
\put(-75,0){\circle*{9}}\put(-48,-3){$\bigotimes$} \put(-15,0){\circle{9}}\put(12,0){\circle{9}} \put(39,0){\circle{9}}
\put(-71,0){\line(1,0){24}} \put(-38,0){\line(1,0){19}}\put(-18,0){\vertexcn}\put(16,0){\line(1,0){19}}
 \put(0,-32){\makebox(0,0){$\mathfrak{sl}(2,\mathbb{R})\oplus \mathfrak{so}(7)$}} 
   \end{picture}
\end{displaymath}
\vspace{1cm}
   \begin{displaymath}
    \begin{picture}(60,20) \thicklines 
\put(-75,0){\circle{9}}\put(-48,-2.5){$\bigotimes$} \put(-15,0){\circle*{9}}\put(12,0){\circle{9}} \put(39,0){\circle{9}}
 \put(-71,0){\line(1,0){24}} \put(-38,0){\line(1,0){19}}\put(-18,0){\vertexcn}\put(16,0){\line(1,0){19}}
 \put(0,-32){\makebox(0,0){$\mathfrak{su}(2)\oplus \mathfrak{so}(1,6)$}} 
   \end{picture}
\end{displaymath}
\vspace{1cm}
   \begin{displaymath}
    \begin{picture}(60,20) \thicklines 
\put(-75,0){\circle*{9}}\put(-48,-2.5){$\bigotimes$} \put(-15,0){\circle{9}}\put(12,0){\circle*{9}} \put(39,0){\circle{9}}
  \put(-71,0){\line(1,0){24}}\put(-38,0){\line(1,0){19}}\put(-18,0){\vertexcn}\put(16,0){\line(1,0){19}}
  \put(0,-32){\makebox(0,0){$\mathfrak{sl}(2,\mathbb{R})\oplus \mathfrak{so}(3,4)$}} 
   \end{picture}
\end{displaymath}
\vspace{1cm}
   \begin{displaymath}
    \begin{picture}(60,20) \thicklines 
\put(-75,0){\circle{9}} \put(-48,-2.5){$\bigotimes$} \put(-15,0){\circle{9}}\put(12,0){\circle{9}} \put(39,0){\circle*{9}}
  \put(-71,0){\line(1,0){24}} \put(-38,0){\line(1,0){19}}\put(-18,0){\vertexcn}\put(16,0){\line(1,0){19}}
  \put(0,-32){\makebox(0,0){$\mathfrak{su}(2)\oplus \mathfrak{so}(2,5)$}} 
   \end{picture}
\end{displaymath}
\vspace{1cm}

\item $G(3)$.

The Vogan diagram of $G(3)$ with real form  $\mathfrak{sl}(2,\mathbb{R})\oplus G_{2,0}$ and $\mathfrak{sl}(2,\mathbb{R})\oplus G_{2,2}$
are

\begin{displaymath}
\begin{picture}(60,20) \thicklines
\put(-34,0){\circle*{9}}\put(-7,-2.5){$\bigotimes$}\put(26,0){\circle{9}}\put(54,0){\circle{9}}
\put(-30,0){\line(1,0){24}}\put(3,0){\line(1,0){19}}\put(23,0){\vertexccn}
\end{picture}
\end{displaymath}
\begin{displaymath}
\begin{picture}(60,20) \thicklines
\put(-34,0){\circle*{9}}\put(-7,-2.5){$\bigotimes$}\put(26,0){\circle{9}}\put(54,0){\circle*{9}}
\put(-30,0){\line(1,0){24}}\put(3,0){\line(1,0){19}}\put(23,0){\vertexccn}
\end{picture}
\end{displaymath}
\end{enumerate}

\paragraph{Acknowledgement:}

The authors thank National Board of Higher Mathematics, India (Project
Grant No. 48/3/2008-R\&DII/196-R) for financial support.

\lyxaddress{\begin{center}
Biswajit Ransingh, Harish Chandra Research Institute, Allahabad(INDIA) 211019, email- bransingh@gmail.com 
\end{center}}
\lyxaddress{\begin{center}
K C Pati, Department of Mathematics, National Institute of Technology, Rourkela (India) 769008, email- kcpati@rediffmail.com 
\end{center}}
\end{document}